\newtheorem{thm}{Theorem}
\newtheorem{cor}[thm]{Corollary}
\newtheorem{lem}[thm]{Lemma}
\newtheorem{prop}[thm]{Proposition}
\theoremstyle{definition}
\newtheorem{defn}[thm]{Definition}
\newtheorem{rem}[thm]{Remark}
\newcommand{\CC}{\mathbb{C}}
\newcommand{\eps}{\varepsilon}
\renewcommand{\Im}{\operatorname{Im}}
\renewcommand{\Re}{\operatorname{Re}}
\newcommand{\NN}{\mathbb{N}}
\newcommand{\RR}{\mathbb{R}}
\newcommand{\ZZ}{\mathbb{Z}}
\newcommand{\HH}{\mathbb{H}}
\newcommand{\GL}{\textrm{GL}}
\newcommand{\SL}{\textrm{SL}}
\DeclareOldFontCommand{\bf}{\normalfont\bfseries}{\mathbf}
\title{Analytic evaluation of Hecke eigenvalues for classical modular forms}
\author{David Armend\'ariz\footnote{\url{darmendarizp@estud.usfq.edu.ec}},  Owen Colman\footnote{\url{owencolman@gmail.com}}, Nicol\'as Coloma\footnote{\url{ncoloma@estud.usfq.edu.ec}},\\
Alexandru Ghitza\footnote{\url{aghitza@alum.mit.edu}}, Nathan C.~Ryan\footnote{\url{nathan.ryan@bucknell.edu}}, and Dar\'io Ter\'an\footnote{\url{dteranacaro@ufl.edu}}}
\date{\today}
\begin{document}
\thispagestyle{empty}

\maketitle
\begin{abstract}
  We propose a method for computing approximations to the Hecke eigenvalues of a classical modular eigenform $f$, based on the analytic evaluation of $f$ at points in the upper half plane.
  Our approach works with arbitrary precision, allows for a strict control of the error in the approximation, and outperforms current exact computation methods.

  AMS 2010 classification: 11F25 (11F03).

  Keywords: Hecke eigenvalues, analytic evaluation.
\end{abstract}

\section{Introduction}\label{sect:intro}

Computing Fourier coefficients and Hecke eigenvalues of modular forms has long been of interest.  The relationship between the number of points modulo a prime $p$ on an elliptic curve $E$ and the coefficients of the modular form associated to $E$ is a sample motivation.  It is well-known that for modular forms that are eigenforms for the Hecke operators (and suitably normalized), the Fourier coefficients and Hecke eigenvalues agree.  In this paper we describe a new method to compute Hecke eigenvalues of eigenforms, as well as an implementation of the method.  An analogous version of this algorithm but for Maass forms was first described by Hejhal \cite{hejhal} and optimized by Booker, Str\"ombergsson, and Venkatesh \cite{booker}.

For example, consider the Ramanujan $\tau$-function given by
\[
\sum_{n \geq 1} \tau(n)q^n = q\prod_{n\geq 1} (1-q^n)^{24} = \Delta(z)
\]
where $q=2\pi i z$ and $\Im(z)>0$; $\Delta(z)$ is the unique modular cusp form of level 1 and weight 12.  A 1947 conjecture due to Lehmer \cite{lehmer}, asserts that the Fourier coefficients of $\Delta(z)$ never vanish.  Since Fourier coefficients of $\Delta(z)$ are also its Hecke eigenvalues, it can be said that one of the oldest unsolved conjectures about modular forms can be verified by computing the Hecke eigenvalues of a modular form.  

The standard way to compute Hecke eigenvalues, as is done in Sage \cite{sage} and MAGMA \cite{magma}, is by the method of modular symbols (see \cite{stein} for an algorithmic introduction to modular symbols).  The advantage of this method is that it is quite general; in principle, it can be carried out for modular forms of any weight and level.  The issue is that, as the dimension of the space of modular forms increases (e.g., as the level or weight increases) the linear algebra required for the computation of Hecke eigenvalues becomes quite difficult and inefficient, making it hard to compute the $p$th eigenvalue for moderately large $p$.  Recently, Wuthrich \cite{wuthrich} proposed the computation of modular symbols which is faster for large level compared to traditional methods of computing modular symbols.  Also recently, PARI/GP \cite{pari} has started to include methods to compute Hecke eigenvalues based on trace formulas \cite{belabas}.

A significant breakthrough appeared in the second half of the last decade with the work of Couveignes and Edixhoven and their collaborators~\cite{couveignes}.
Their main result is an algorithm that computes the Galois representation over a finite field attached to a modular eigenform of level one in time polynomial in the logarithm of the cardinality of the finite field.
One can therefore compute the coefficients of such eigenforms (in characteristic zero) via a multimodular algorithm.
Unfortunately, we are not aware of any implementations of this algorithm that are available for public use.

We propose yet another way to compute Hecke eigenvalues.  The idea is relatively simple.  For $f$ a newform, we know that the action of the Hecke operator $T_p$ gives $T_pf = \lambda_pf$, where the equality is of functions.  In other words, if we could evaluate $T_pf$ and $f$ at some fixed point $z_0$ in the complex upper half-plane $\mathbb{H}$ at which $f$ does not vanish, then 
\[
\lambda_p = \frac{T_pf(z_0)}{f(z_0)}.
\]
One clear drawback to our proposed method is that we only get a numerical approximation to $\lambda_p$.  This drawback is not too problematic.  For numerical experiments on $L(f,s)$, the L-function associated to $f$, only numerical approximations to the Dirichlet series coefficients (which are determined by the Hecke eigenvalues of $f$) are required.  If an exact representation of the Hecke eigenvalue is required (e.g., as an element of a number field), since the number field in which the Hecke eigenvalue lives is known, one can use LLL to find the Hecke eigenvalue exactly.  The advantages of our method are that it allows us to compute Hecke eigenvalues more quickly than using the traditional approach and that it can easily be made parallel.
As we indicate below, extensive benchmarks also show that our method exhibits performance that compares very favorably with modular symbol approaches.

In the special case of a level one eigenform $f$, we investigate a variant of the analytic evaluation method that writes $f$ as an explicit polynomial in the Eisenstein series $E_4$ and $E_6$, and evaluates these Eisenstein series at relevant points as described above.

\subsection*{Acknowledgments:}  We thank the Fulbright Commission of Ecuador for making the collaboration of Armend\'ariz, Ter\'an and Ryan possible.  We also thank David Farmer for some helpful conversations.

\section{Modular forms background}

We define the upper half plane $\HH=\{z\in\CC:\Im(z)>0\}$ and the group 
\[
\SL_2(\RR)=\left\{ \begin{pmatrix}a&b\\c&d\end{pmatrix}:a,b,c,d\in\RR\text{ and } ad-bc = 1\right\}.
\]
The group $\SL_2(\RR)$ acts on $\HH$ via fractional linear transformations: if $\gamma=\left(\begin{smallmatrix} a&b\\c&d\end{smallmatrix}\right)$ then $\gamma z=\tfrac{az+b}{cz+d}$.  

We also consider the discrete subgroup $\SL_2(\mathbb{Z})$, as well as the congruence subgroups $\Gamma_0(N)$ (for $N\geq 1$) of $\SL_2(\ZZ)$ given by
\[
  \Gamma_0(N) = \left\{\left(\begin{smallmatrix} a&b\\c&d\end{smallmatrix}\right)\in \SL_2(\ZZ)\colon c\equiv 0\pmod{N}\right\}.
\]

\begin{defn}\label{defn:MF}
A modular form of level $N$ and weight $k$ is a holomorphic function $f:\mathbb{H}\to\CC$ satisfying the following:
\begin{itemize}
\item The function $f$ has the symmetry
\[
f\left(\frac{az+b}{cz+d}\right)=(cz+d)^{k}f(z)
\]
for all elements
$\left(\begin{smallmatrix}a&b\\c&d\end{smallmatrix}\right)$ in
$\Gamma_0(N)$ and all $z\in\mathbb{H}$.
\item The function $f(z)$ is bounded as $z$ approaches the cusps of $\Gamma_0(N)$.
\end{itemize}
The space of all modular forms of weight $k$ and level $N$ is denoted $M_k(N)$.
\end{defn}

\begin{rem}
We define the slash notation:
\[
f\mid_{\gamma,k} (z) = f\left(\frac{az+b}{cz+d}\right)(cz+d)^{-k}\text{ for }\gamma=\left(\begin{smallmatrix}a&b\\c&d\end{smallmatrix}\right)
\]
which allows us to write the functional equation in Definition~\ref{defn:MF} as
\[
f\mid_{\gamma,k} (z)=f(z).
\]
We also observe that for $\gamma\in \GL_2(\RR)$, the group of invertible matrices with entries in $\RR$, we can define
\[
f\mid_{\gamma,k}(z) = \det(\gamma)^{k-1}f\left(\frac{az+b}{cz+d}\right)(cz+d)^{-k}\text{ for }\gamma=\left(\begin{smallmatrix}a&b\\c&d\end{smallmatrix}\right) .
\]
\end{rem}
Since $\left(\begin{smallmatrix} 1& 1\\0&1\end{smallmatrix}\right)\in \Gamma_0(N)$ for all $N\geq 1$, we see that modular forms are periodic and, since they are holomophic on $\HH$ and at the cusps, they admit a Fourier expansion of the form
\[f(q)=\sum_{n\geq 0} a_n(f)q^n.\]

\begin{defn} A cusp form of weight $k$ and level $N$ is a modular form $f$ of weight $k$ and level $N$ that vanishes at the cusps.  In other words, $f$ is a cusp form if its Fourier expansion is of the form $\sum_{n\geq 1}a_n(f)q^n$.  We denote the space of all cusp forms of weight $k$ and level $N$ by $S_k(N)$.
\end{defn}

The arithmetically distinguished cuspforms that we are most interested in are those that are Hecke eigenforms.  Let $f$ be a modular form of weight $k$ and level $N$.  We now define the action of the Hecke operator $T_p$ on $f$ when $p$ and $N$ are relatively prime:
\begin{equation}\label{eq:Tp-a}
  T_p(f)(z) = p^{k-1} f(pz) + \frac{1}{p}\sum_{j\pmod{p}} f\left(\tfrac{z+j}{p}\right).
\end{equation}
Similarly, when $p\mid N$ we get
\begin{equation}\label{eq:Tp-b}
  T_p(f)(z) = \frac{1}{p}\sum_{j\pmod{p}} f\left(\tfrac{z+j}{p}\right).
\end{equation}

Among Hecke eigenforms, we are particularly interested in newforms:
\begin{defn}
If $d_1d_2=N$ and $f\in M_k(d_1)$, then we also have $f\in M_k(N)$ and also $g(z)=f(d_2z)\in M_k(N)$.  The subspace of $S_k(N)$ spanned by the forms obtained in one of these ways are the old forms and the orthogonal complement of the oldforms are the newforms.  The space of newforms is denoted $S_k^{\text{new}}(N)$.
\end{defn}

There is a basis for $S_k^{\text{new}}(N)$ consisting of eigenforms for the Hecke operators $T_p$.  These basis elements, called Hecke eigenforms, are also eigenforms for the Atkin-Lehner operator $W_N$ defined as follows:  let $w_N = \left(\begin{smallmatrix} 0 & -1\\N & 0\end{smallmatrix}\right)$.  Then 
\begin{align*}
W_N(f)(z) &= N^{(2-k)/2} f\mid_{w_N,k}(z)\\ 
&=N^{(2-k)/2} f\left(\frac{-1}{Nz}\right)N^{k-1}(Nz)^{-k}.
\end{align*}
The map $W_N$ is an involution and so $W_N(f)(z)$ is also equal to $\pm f(z)$.  We will use this map to move points away from the real axis.

Two facts about modular forms to which we will refer later on are:
\begin{enumerate}
  \item Deligne's bound on Fourier coefficients: $|a_p(f)| \leq 2p^\frac{k-1}{2}$ and, consequently, $|a_n(f)|\leq d(n) n^{\frac{k-1}{2}}$, where $d(n)$ is the number of divisors of $n$.  We will use this bound when determining how many Fourier coefficients we need in order to be able to evaluate a given modular form $f$ at a given point $z$ in $\HH$. 
  \item The Fourier coefficients of a normalized eigenform are real. This follows from Hecke's theorem relating Hecke eigenvalues to Fourier coefficients, since the Hecke operators are self-adjoint for the Petersson inner product and that the eigenvalues of a self-adjoint operator are real. This fact will be used throughout our computations.
\end{enumerate}

\section{Analytic preliminaries}\label{sec:analytic}

 We begin by stating some results related to bounding the error introduced when we evaluate a given classical modular form and its image under the Hecke operator $T_p$ at a point in the upper half-plane.

\subsection{Error}

We have a quantity defined as
\begin{equation*}
  z=\frac{x}{y}\quad\text{with }x,y\in\CC.
\end{equation*}
The numerator and denominator can be approximated to $x_A$, resp. $y_A$.  In particular, we think of $x_A$ as a numerical approximation to $T_p(f)(z_0)$, for some $z_0\in\HH$, and $y_A$ as $f(z_0)$.

Given $\eps>0$, we seek values of $\eps_x$ and $\eps_y$ ensure that
\begin{equation*}
  \text{if }|x-x_A|<\eps_x\text{ and }|y-y_A|<\eps_y\text{ then }|z-z_A|<\eps?
\end{equation*}

\begin{lem}
  With the above notation, let $e_x=x-x_A$ and $e_y=y-y_A$.
  Then
  \begin{equation*}
    z-z_A=\frac{e_x-e_yz_A}{y_A+e_y}.
  \end{equation*}
\end{lem}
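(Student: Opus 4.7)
The statement is a purely algebraic identity, so the plan is direct substitution followed by simplification; no analytic input is needed.

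First I would write the two approximants side by side using $x = x_A + e_x$ and $y = y_A + e_y$, so that
\begin{equation*}
  z - z_A = \frac{x}{y} - \frac{x_A}{y_A} = \frac{x_A + e_x}{y_A + e_y} - \frac{x_A}{y_A}.
\end{equation*}
Next I would bring the two fractions over the common denominator $y_A + e_y$ (choosing this denominator rather than $y_A(y_A + e_y)$ because the target expression has $y_A + e_y$ in its denominator). Writing $z_A = x_A / y_A$, the subtracted term becomes $z_A$, so
\begin{equation*}
  z - z_A = \frac{x_A + e_x - z_A(y_A + e_y)}{y_A + e_y}.
\end{equation*}
Finally I would use the defining relation $z_A y_A = x_A$ to cancel the $x_A$ terms in the numerator, leaving $e_x - z_A e_y$ on top, which is the claimed identity.

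There is essentially no obstacle: the only thing to keep track of is that $y_A + e_y = y \neq 0$, which is implicit since $z = x/y$ is assumed defined, and that the arithmetic in $\CC$ proceeds formally. The lemma is really a setup for the bounds on $\eps_x$ and $\eps_y$ that will be derived from it in the sequel.
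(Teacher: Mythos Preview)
Your proof is correct and is exactly the straightforward calculation the paper has in mind; the paper's own proof consists of the single phrase ``Straightforward calculation'', and your substitution-and-simplification is the natural way to carry that out.
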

\begin{proof}
 Straightforward calculation
\end{proof}

\begin{prop}
  \label{prop:quot}
  For any $h\in (0,1)$, if
  \begin{equation*}
    \eps_x<\frac{h\eps|y_A|}{2}
    \quad\text{and}\quad
    \eps_y<\min\left\{\frac{(1-h)\eps |y_A|}{2|z_A|}, \frac{|y_A|}{2}\right\},
  \end{equation*}
  then $|z-z_A|<\eps$.
\end{prop}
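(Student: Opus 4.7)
The plan is to apply the formula from the preceding lemma and then bound the numerator from above and the denominator from below using the triangle inequality (and its reverse form), exploiting the two hypotheses on $\eps_x$ and $\eps_y$ in a direct and essentially computational way.

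First I would start from the identity
\begin{equation*}
  z - z_A = \frac{e_x - e_y z_A}{y_A + e_y}
\end{equation*}
supplied by the lemma, and apply the triangle inequality to the numerator to obtain $|e_x - e_y z_A| \leq |e_x| + |e_y||z_A|$. Substituting the hypotheses $|e_x| < \eps_x$ and $|e_y| < \eps_y$ together with the stated bounds on $\eps_x$ and $\eps_y$, the two terms are designed to combine cleanly: the first contributes at most $\tfrac{h \eps |y_A|}{2}$ and the second at most $\tfrac{(1-h)\eps|y_A|}{2}$, so their sum is strictly less than $\tfrac{\eps |y_A|}{2}$. This is the whole point of the splitting parameter $h \in (0,1)$.

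Next I would bound the denominator from below using the reverse triangle inequality: $|y_A + e_y| \geq |y_A| - |e_y|$. The second term of the minimum in the hypothesis on $\eps_y$, namely $\eps_y < |y_A|/2$, is precisely what is needed here, giving $|y_A + e_y| > |y_A|/2$. Combining this with the numerator bound yields
\begin{equation*}
  |z - z_A| < \frac{\eps |y_A|/2}{|y_A|/2} = \eps,
\end{equation*}
which is the desired conclusion.

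There is no real obstacle: the proof is essentially a bookkeeping exercise in which the two components of the hypothesis on $\eps_y$ play distinct roles (one bounds the numerator, the other keeps the denominator away from zero), and the parameter $h$ apportions the allowed error $\eps/2$ in the numerator between the contributions of $e_x$ and of $e_y z_A$. The only small care required is to check that the strict inequalities propagate correctly through the triangle and reverse triangle inequalities, which they do since all three hypotheses are strict.
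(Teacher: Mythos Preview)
Your proposal is correct and follows essentially the same approach as the paper's proof: use the lemma's identity, bound the denominator below by $|y_A|/2$ via the reverse triangle inequality and the condition $\eps_y<|y_A|/2$, and bound the numerator above by $\tfrac{h\eps|y_A|}{2}+\tfrac{(1-h)\eps|y_A|}{2}$ using the other two hypotheses. The paper's write-up is simply a terser version of what you have spelled out.
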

\begin{proof}
 Under the hypotheses, we have $|y_A+e_y|>|y_A|/2$ so
  \begin{equation*}
    |z-z_A|<\frac{2}{|y_A|}\left(|e_x|+|e_yz_A|\right)
    <h\eps+(1-h)\eps=\eps.
  \end{equation*}
\end{proof}

The value of the parameter $h$ can be chosen in such a way that the calculations of $x_A$ and of $y_A$ are roughly of the same level of difficulty.

In addition to the error in a quotient, we also point out that we need to take into account error in a sum.  In particular, to compute $T_p(f)(z_0)$ for some $z_0\in\HH$ we need to evaluate $f$ at $p+1$ (respectively, $p$) if $p$ does not divide (respectively, does divide) the level $N$.  In particular, suppose $x=\sum_{i=1}^N x^{(i)}$, $x_A = \sum_{i=1}^N x_A^{(i)}$ and that $|x^{(i)}-x^{(i)}_A|<\eps_i$.  We want an $\eps_i$ so that $|x^{(i)}-x^{(i)}_A|<\eps_i$ guarantees $|x-x_A|<\eps$.  By the triangle inequality, we can let $\eps_i = \tfrac{\eps}{N}$.

\subsection{Truncation error for normalised eigenforms}
Let $f\in S_k^\text{new}(\Gamma_0(N))$ be a classical newform of weight $k$ with Fourier expansion
\begin{equation*}
  f(z)=\sum_{n=0}^\infty a_n(f) e^{2\pi i nz}.
\end{equation*}
Given $T\in\NN$, we consider the truncated expansion
\begin{equation*}
  f_T(z)=\sum_{n=0}^T a_n(f) e^{2\pi i nz}.
\end{equation*}
Let
\begin{equation*}
  d=\begin{cases}
    \frac{k+1}{2} & \text{if $k$ is odd},\\
    \frac{k+2}{2} & \text{if $k$ is even}.
  \end{cases}
\end{equation*}

\begin{prop}
  \label{prop:truncate}
  Let $\eps>0$ and $y=\Im(z)$.
  If $T$ is such that
  \begin{equation*}
    T\geq \frac{d}{2\pi y}\qquad\text{and}\qquad
    \frac{d+1}{2\pi y}e^{-2\pi yT}T^d<\eps,
  \end{equation*}
  then $|f(z)-f_T(z)|<\eps$.
\end{prop}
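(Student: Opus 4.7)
The plan is a direct tail-bound estimate. Since $|e^{2\pi i n z}| = e^{-2\pi n y}$, the triangle inequality gives
\[
  |f(z) - f_T(z)| \leq \sum_{n=T+1}^\infty |a_n(f)|\, e^{-2\pi n y}.
\]
I would bound the coefficients using Deligne's estimate $|a_n(f)| \leq d(n)\, n^{(k-1)/2}$ (fact 1 from Section 2) together with the trivial bound $d(n) \leq n$, which gives $|a_n(f)| \leq n^{(k+1)/2}$. The parity-sensitive definition of $d$ is arranged precisely so that $(k+1)/2 \leq d$ and $d$ is a positive integer, so that $|a_n(f)| \leq n^d$ in both cases.

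Next I would pass from the sum to an integral. Differentiating shows that $g(t) = t^d e^{-2\pi y t}$ is decreasing exactly when $t \geq d/(2\pi y)$, which by the first hypothesis covers all of $[T,\infty)$; the standard monotone comparison then yields $\sum_{n \geq T+1} g(n) \leq \int_T^\infty g(t)\, dt$. After the substitution $u = 2\pi y t$, the integral becomes $(2\pi y)^{-(d+1)}\, \Gamma(d+1, A)$ with $A = 2\pi y T \geq d$, where $\Gamma(s, A)$ denotes the upper incomplete Gamma function.

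The remaining task, and the place where I expect to have to work, is producing the explicit constant $d+1$ in the bound $\Gamma(d+1, A) \leq (d+1) A^d e^{-A}$. I would do this by substituting $u = A + v$, expanding $(A+v)^d$ by the binomial theorem (available because $d$ is a positive integer), and using $\int_0^\infty v^k e^{-v}\, dv = k!$ to write
\[
  \Gamma(d+1, A) = A^d e^{-A} \sum_{k=0}^d \frac{d!}{(d-k)!}\, A^{-k}.
\]
Since $A \geq d$, each ratio $d!/((d-k)!\, A^k) \leq (d/A)^k \leq 1$, so the sum is bounded by $d+1$. Combining everything gives
\[
  |f(z) - f_T(z)| \leq \frac{d+1}{2\pi y}\, T^d e^{-2\pi y T} < \eps
\]
by the second hypothesis. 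The obstacle is mild but real: the cleanness of the constant hinges on $d$ being a positive integer and on $A \geq d$, which is precisely what the parity-tailored definition of $d$ and the hypothesis $T \geq d/(2\pi y)$ are engineered to provide.
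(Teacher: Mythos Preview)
Your proposal is correct and follows essentially the same route as the paper: Deligne's bound with $d(n)\leq n$ to get $|a_n(f)|\leq n^d$, monotonicity of $t^d e^{-2\pi yt}$ for $t\geq d/(2\pi y)$, and integral comparison. The only cosmetic difference is that you package the integral as an incomplete Gamma function and evaluate it via the shift $u=A+v$ and the binomial theorem, whereas the paper writes the same exact finite sum directly (via iterated integration by parts); after the reindexing $j=d-k$ your expression $A^d e^{-A}\sum_{k=0}^d \tfrac{d!}{(d-k)!}A^{-k}$ coincides with the paper's $\tfrac{e^{-2\pi yT}}{2\pi y}\sum_{j=0}^d \tfrac{d!}{j!(2\pi y)^{d-j}}T^j$, and your term-by-term bound $(d/A)^k\leq 1$ is identical to the paper's $\tfrac{(j+1)\cdots d}{(2\pi y)^{d-j}}\leq T^{d-j}$.
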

\begin{proof}
  We have
  \begin{equation*}
    |f(z)-f_T(z)|
    =\left|\sum_{n=T+1}^\infty a_n(f) e^{2\pi i nz}\right|
    \leq\sum_{n=T+1}^\infty |a_n(f)| e^{-2\pi y n}.
  \end{equation*}
  Using Deligne's bound we have
  \begin{equation*}
    |a_n(f)|\leq d(n) n^{(k-1)/2}\leq n^{(k+1)/2}\leq n^d,
  \end{equation*}
  which gives us
  \begin{equation*}
    |f(z)-f_T(z)|\leq \sum_{n=T+1}^\infty n^d e^{-2\pi yn}.
  \end{equation*}
  Letting $g(x)=x^de^{-2\pi yx}$ we find easily that $g$ is decreasing for $x\geq d/(2\pi y)$.
  By hypothesis we have $T\geq d/(2\pi y)$, so we can compare the above infinite series with the improper integral
  \begin{equation*}
    |f(z)-f_T(z)|\leq \int_T^\infty x^de^{-2\pi yx} dx
    =\frac{e^{-2\pi yT}}{2\pi y}\sum_{j=0}^d\frac{d!}{j!(2\pi y)^{d-j}}T^j.
  \end{equation*}
  For any $j$ between $0$ and $d$, since $T\geq d/(2\pi y)$ we have
  \begin{equation*}
    \frac{(j+1)\cdots d}{(2\pi y)^{d-j}}\leq \left(\frac{d}{2\pi y}\right)^{d-j}<T^{d-j},
  \end{equation*}
  so
  \begin{equation*}
    |f(z)-f_T(z)|\leq \frac{e^{-2\pi yT}}{2\pi y}(d+1)T^d,
  \end{equation*}
  which is smaller than $\eps$ by hypothesis.
\end{proof}

The conditions on $T$ make it clear that larger values of $y$ allow for shorter truncations.

When our modular form is of level 1, we can use the modularity of $f$ to move the evaluation from the point $z=x+iy$ to another point $(az+b)/(cz+d)$ that has a larger imaginary part:
\begin{equation*}
  \Im\left(\frac{az+b}{cz+d}\right)=\frac{y}{(cx+d)^2+c^2y^2}.
\end{equation*}
The group $\SL_2(\ZZ)$ is generated by the matrices
\begin{equation*}
T = \begin{bmatrix} 1&1\\0&1 \end{bmatrix}
  \qquad\text{and}\qquad
S=\begin{bmatrix} 0 & 1\\-1&0 \end{bmatrix}.
\end{equation*}
The former does not change the imaginary part, so it is enough to consider the latter:
\begin{equation*}
  z=x+iy\longmapsto -\frac{1}{z}=-\frac{x}{x^2+y^2}+\frac{y}{x^2+y^2}\,i.
\end{equation*}
This leads us to a very simple approach: given $z=x+iy$, first use the periodicity of $f$ to ensure $|x|\leq 1/2$.
If $x^2+y^2\geq 1$, then evaluate $f$ at $z$; otherwise evaluate $f$ at $-1/z$ and use
\begin{equation*}
  f(z)=z^{-k}f\left(-\frac{1}{z}\right).
\end{equation*}

\begin{rem}\label{rem:inversion}
  In the case of modular forms of level $N>1$, it is harder to guarantee a large imaginary part.
  For example, consider $\Gamma_0(2)$.
  Let $\mathcal{F}$ be a fundamental domain for $\SL_2(\ZZ)$
\[
\mathcal{F}=\left\{z\in\HH: |\Re(z)| \leq \tfrac12 \text{ and } |z|\geq 1\right\},
\]
and $S$ and $T$ be the matrices defined above.  Then a fundamental domain for $\Gamma_0(2)$ is $\mathcal{F}\cup ST\mathcal{F}\cup S\mathcal{F}$.  Since $S\mathcal{F}$ extends down towards the origin, there is no obvious way to move a purely imaginary number with a small imaginary part (say less than 1) to a point with a larger imaginary part (say greater than 1).  

We show how to overcome this difficulty in the cases $N=2,3$ by using Atkin-Lehner operators.
For $N=1$, the matrix $\left(\begin{smallmatrix}0 & -1\\1&0\end{smallmatrix}\right)$ sends a point inside the unit circle to one outside the unit circle.
This inversion along with translations (recall $f(z+m)=f(z)$ for all $m\in \ZZ$) allows us to assume an imaginary part bigger than $1$.
The matrix $w_N = \left(\begin{smallmatrix}0&-1\\N&0\end{smallmatrix}\right)$ is inversion across the circle centered at the origin and of radius $1/\sqrt{N}$.
Figure~\ref{fig:atkin} shows the outcome of applying this inversion for the cases $N=1,2,3,4$ in the computation of the Hecke operator $T_5$.
We can see that the imaginary part increases after applying $w_N$, but that for $N=4$ (and higher) this increase becomes too negligeable to be useful.
Related techniques (such as using $q$-expansions with respect to cups other than $i\infty$) may be appropriate for higher levels.
\end{rem}

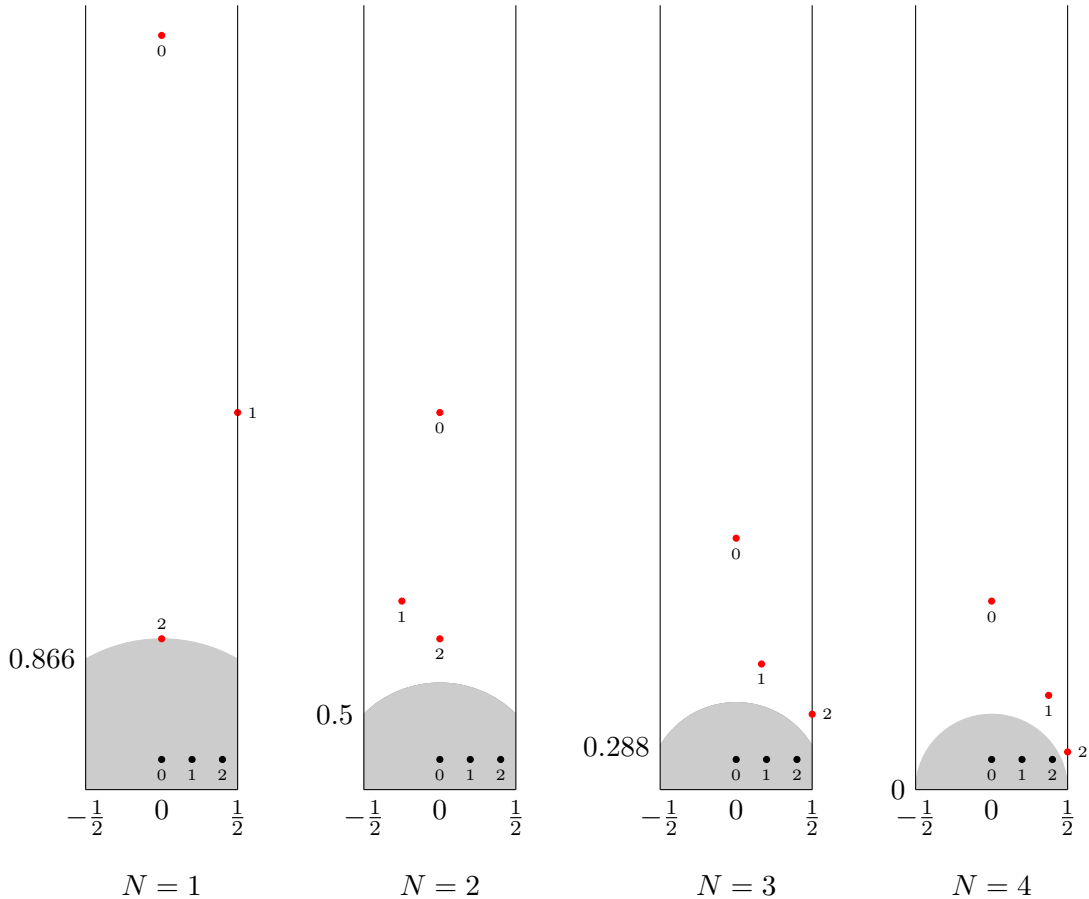
\begin{figure}[h]
  \label{fig:atkin}
\begin{tikzpicture}[scale=2]
  \coordinate[label=below:$0$] (O) at (0,0);
  \coordinate[label=below:{$N=1$}] (lbl) at (0,-0.5);
  \coordinate[label=below:$-\frac{1}{2}$] (left) at (-0.5, 0);
  \coordinate[label=below:$\frac{1}{2}$] (right) at (0.5, 0);
  \coordinate[label=left:$0.866$] (bd) at (-0.5,0.866);
  \fill[black!20] (-0.5,0.866) -- (0.5,0.866) -- (0.5,0) -- (-0.5,0) -- cycle;
  \draw[fill,black!20] ($(O) + (60:1)$) arc (60:120:1);
  \coordinate[label=below:{\tiny $0$}] (h0) at (0,0.2);
  \coordinate[label=below:{\tiny $1$}] (h1) at (0.2,0.2);
  \coordinate[label=below:{\tiny $2$}] (h2) at (0.4,0.2);
  \draw[fill] (h0) circle (0.02);
  \draw[fill] (h1) circle (0.02);
  \draw[fill] (h2) circle (0.02);
  \coordinate[label=below:{\tiny $0$}] (i0) at (0,5);
  \coordinate[label=right:{\tiny $1$}] (i1) at (0.5,2.5);
  \coordinate[label=above:{\tiny $2$}] (i2) at (0,1);
  \draw[fill,red] (i0) circle (0.02);
  \draw[fill,red] (i1) circle (0.02);
  \draw[fill,red] (i2) circle (0.02);
  \draw (-0.5,0) -- (0.5,0);
  \draw (left) -- (-0.5,5.2);
  \draw (right) -- (0.5,5.2);
\end{tikzpicture}
\quad
\begin{tikzpicture}[scale=2]
  \coordinate[label=below:$0$] (O) at (0,0);
  \coordinate[label=below:{$N=2$}] (lbl) at (0,-0.5);
  \coordinate[label=below:$-\frac{1}{2}$] (left) at (-0.5, 0);
  \coordinate[label=below:$\frac{1}{2}$] (right) at (0.5, 0);
  \coordinate[label=left:$0.5$] (bd) at (-0.5,0.5);
  \draw[gray] ($(O) + (45:0.7071)$) arc (45:135:0.7071);
  \fill[black!20] (-0.5,0.5) -- (0.5,0.5) -- (0.5,0) -- (-0.5,0) -- cycle;
  \draw[fill,black!20] ($(O) + (45:0.7071)$) arc (45:135:0.7071);
  \draw (-0.5,0) -- (0.5,0);
  \draw (left) -- (-0.5,5.2);
  \draw (right) -- (0.5,5.2);
  \coordinate[label=below:{\tiny $0$}] (h0) at (0,0.2);
  \coordinate[label=below:{\tiny $1$}] (h1) at (0.2,0.2);
  \coordinate[label=below:{\tiny $2$}] (h2) at (0.4,0.2);
  \draw[fill] (h0) circle (0.02);
  \draw[fill] (h1) circle (0.02);
  \draw[fill] (h2) circle (0.02);
  \coordinate[label=below:{\tiny $0$}] (i0) at (0,2.5);
  \coordinate[label=below:{\tiny $1$}] (i1) at (-0.25,1.25);
  \coordinate[label=below:{\tiny $2$}] (i2) at (0,1);
  \draw[fill,red] (i0) circle (0.02);
  \draw[fill,red] (i1) circle (0.02);
  \draw[fill,red] (i2) circle (0.02);
\end{tikzpicture}
\quad
\begin{tikzpicture}[scale=2]
  \coordinate[label=below:$0$] (O) at (0,0);
  \coordinate[label=below:{$N=3$}] (lbl) at (0,-0.5);
  \coordinate[label=below:$-\frac{1}{2}$] (left) at (-0.5, 0);
  \coordinate[label=below:$\frac{1}{2}$] (right) at (0.5, 0);
  \coordinate[label=left:$0.288$] (bd) at (-0.5,0.288);
  \draw[gray] ($(O) + (30:0.57735)$) arc (30:150:0.57735);
  \fill[black!20] (-0.5,0.288) -- (0.5,0.288) -- (0.5,0) -- (-0.5,0) -- cycle;
  \draw[fill,black!20] ($(O) + (30:0.57735)$) arc (30:150:0.57735);
  \draw (-0.5,0) -- (0.5,0);
  \draw (left) -- (-0.5,5.2);
  \draw (right) -- (0.5,5.2);
  \coordinate[label=below:{\tiny $0$}] (h0) at (0,0.2);
  \coordinate[label=below:{\tiny $1$}] (h1) at (0.2,0.2);
  \coordinate[label=below:{\tiny $2$}] (h2) at (0.4,0.2);
  \draw[fill] (h0) circle (0.02);
  \draw[fill] (h1) circle (0.02);
  \draw[fill] (h2) circle (0.02);
  \coordinate[label=below:{\tiny $0$}] (i0) at (0,1.667);
  \coordinate[label=below:{\tiny $1$}] (i1) at (0.167,0.833);
  \coordinate[label=right:{\tiny $2$}] (i2) at (0.5,0.5);
  \draw[fill,red] (i0) circle (0.02);
  \draw[fill,red] (i1) circle (0.02);
  \draw[fill,red] (i2) circle (0.02);
\end{tikzpicture}
\quad
\begin{tikzpicture}[scale=2]
  \coordinate[label=below:$0$] (O) at (0,0);
  \coordinate[label=below:{$N=4$}] (lbl) at (0,-0.5);
  \coordinate[label=below:$-\frac{1}{2}$] (left) at (-0.5, 0);
  \coordinate[label=below:$\frac{1}{2}$] (right) at (0.5, 0);
  \coordinate[label=left:$0$] (bd) at (-0.5,0);
  \draw[fill,black!20] ($(O) + (0:0.5)$) arc (0:180:0.5);
  \draw (-0.5,0) -- (0.5,0);
  \draw (left) -- (-0.5,5.2);
  \draw (right) -- (0.5,5.2);
  \coordinate[label=below:{\tiny $0$}] (h0) at (0,0.2);
  \coordinate[label=below:{\tiny $1$}] (h1) at (0.2,0.2);
  \coordinate[label=below:{\tiny $2$}] (h2) at (0.4,0.2);
  \draw[fill] (h0) circle (0.02);
  \draw[fill] (h1) circle (0.02);
  \draw[fill] (h2) circle (0.02);
  \coordinate[label=below:{\tiny $0$}] (i0) at (0,1.25);
  \coordinate[label=below:{\tiny $1$}] (i1) at (0.375,0.625);
  \coordinate[label=right:{\tiny $2$}] (i2) at (0.5,0.25);
  \draw[fill,red] (i0) circle (0.02);
  \draw[fill,red] (i1) circle (0.02);
  \draw[fill,red] (i2) circle (0.02);
\end{tikzpicture}
\caption{The Atkin-Lehner operator $W_N$ performs inversion in the circle of radius $1/\sqrt{N}$.  These circles are illustrated above for $N=1,\dots,4$.  We see that the minimal guaranteed imaginary part decreases from $\sqrt{3}/2\approx 0.866$ in level $1$ to $0$ in level $4$ (and higher). We marked the points at which we need to evaluate $f$ in order to estimate $T_5(f)$ at $z=i$, as well as the corresponding points after optimizing the imaginary part via the Atkin-Lehner operator.}
\end{figure}

\section{Implementation details}

We now describe some of the details of our implementation, which can be found at~\cite{code}.
All inexact arithmetic, as well as elementary function evaluations, are done using ball arithmetic via Fredrik Johansson's library arb~\cite{arb}, as packaged in Sage.
This allows us to control the propagation of errors at the level of simple operations.

\subsection{Guaranteed error bounds}

In Section~\ref{sec:analytic} we stated and proved some error bounds.  Recall the notation of Proposition~\ref{prop:quot}.  In particular, we approximate $\lambda_p$ ($z_A$ in the notation of the Proposition) by taking the quotient $\tfrac{T_p(f)(z_0)}{z_0}$ (denoted $\tfrac{x_A}{y_A}$ in the Proposition).  We want to compute $\lambda_p$ to a chosen number of digits of accuracy.

In order to use the results of Proposition~\ref{prop:quot} in practice, we need a lower bound on $|y_A|$  and an upper bound on $|z_A|$ (which can be obtained from the lower bound on $|y_A|$ and an upper bound on $|x_A|$).

How do we bound $|x_A|$?  We compute a very coarse estimate $\tilde{x}$ to $x$, with $\tilde{\eps}_x$ just small enough that $|\tilde{x}|-2\tilde{\eps}_x>0$.  (We can start with $\tilde{\eps}_x=0.1$ and keep dividing by $10$ until the condition holds.)  Later we will make sure that $\eps_x$ is smaller than $\tilde{\eps}_x$.

Then we know that
\begin{equation*}
  |\tilde{x}-x|<\tilde{\eps}_x\qquad\text{and}\qquad
  |x_A-x|<\eps_x\leq\tilde{\eps}_x,
\end{equation*}
so
\begin{equation*}
\big||x_A|-|\tilde{x}|\big|\leq |x_A-\tilde{x}|<2\tilde{\eps}_x\qquad\Rightarrow\qquad 0< |\tilde{x}|-2\tilde{\eps}_x < |x_A| < |\tilde{x}|+2\tilde{\eps}_x,
\end{equation*}
giving us lower and upper bounds on $|x_A|$.  A similar argument works for $|y_A|$.

\subsection{Finding where to truncate}

Our code finds the $T$ described in Proposition~\ref{prop:truncate} as follows.   We increment $T$ by 1 until both conditions of Proposition~\ref{prop:truncate} are met.  

The user is free to choose any value of $z_0$ at which to evaluate the modular form; when $z_0$ is chosen to have a moderately large imaginary part, then $T=1$ satisfies both conditions.  This is problematic because then the difference $|f(z)-f_T(z)|$ becomes zero and we are not really approximating anything. In this case, we arbitrarily increase $T$ to 100.

\subsection{Optimizing the choice of $z_0$}

Our code allows for the user to choose any value of $z_0$ at which to evaluate the modular form $f$.  When $z_0$ is closer to the real axis, our algorithm slows down considerably.  For such $z_0$ we need to compute a very large number of Fourier coefficients since in Proposition~\ref{prop:truncate} the value of $T$ becomes quite large.  

As described in Remark~\ref{rem:inversion}, we use inversion across the unit circle or the circle centered at the origin and of radius $1/\sqrt{N}$ to make the imaginary part bigger.


\subsection{A mirror relation and the case for the imaginary axis}
\begin{prop}
  Let $N\in\NN$ and let $\{a_n\}$ be a collection of real numbers indexed by integers $n\in\ZZ$.
  Let
  \begin{equation*}
    f(z)=\sum_{n=-\infty}^{\infty} a_n e^{2\pi i nz/N},\qquad
    \text{for }z\in\HH.
  \end{equation*}
  Then
  \begin{equation*}
    f(-\overline{z})=\overline{f(z)}\qquad
    \text{for all }z\in\HH.
  \end{equation*}
\end{prop}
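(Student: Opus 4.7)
The plan is to prove the identity by direct substitution into the Fourier-type series, using only two facts: the reality of the coefficients $a_n$, and the compatibility of complex conjugation with the exponential function, namely $\overline{e^{w}} = e^{\overline{w}}$ for all $w \in \CC$. Since $-\overline{z}$ has the same imaginary part as $z$, the series defining $f$ at $-\overline{z}$ converges under exactly the same hypotheses as at $z$, so no separate convergence issue arises.

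First I would compute $-\overline{z}$ inside the exponential: for each $n\in\ZZ$,
\[
e^{2\pi i n(-\overline{z})/N} = e^{-2\pi i n\overline{z}/N} = e^{\overline{2\pi i nz/N}} = \overline{e^{2\pi i nz/N}}.
\]
Then I would substitute into the definition of $f$ and use that $a_n\in\RR$ to pull it inside the conjugation:
\[
f(-\overline{z}) = \sum_{n=-\infty}^{\infty} a_n\,\overline{e^{2\pi i nz/N}} = \sum_{n=-\infty}^{\infty} \overline{a_n\, e^{2\pi i nz/N}}.
\]

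The only nontrivial point is the final step of exchanging conjugation with an infinite sum. I would justify it by appealing to continuity of complex conjugation: for any convergent series $\sum c_n$ of complex numbers, $\overline{\sum c_n} = \sum \overline{c_n}$. Applying this to $c_n = a_n e^{2\pi i nz/N}$ yields
\[
f(-\overline{z}) = \overline{\sum_{n=-\infty}^{\infty} a_n e^{2\pi inz/N}} = \overline{f(z)},
\]
which is the claim. No step here is a real obstacle; the mild care is only in noting that absolute convergence at $z$ transfers to $-\overline{z}$ because $\Im(-\overline{z}) = \Im(z)$, so the partial sums converge at $-\overline{z}$ and the continuity argument applies.
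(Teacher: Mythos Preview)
Your proof is correct and follows essentially the same approach as the paper: both arguments reduce to the termwise identity $e^{2\pi i n(-\overline{z})/N}=\overline{e^{2\pi i nz/N}}$ together with the reality of the $a_n$. The paper verifies this identity by expanding into real and imaginary parts via cosine and sine, whereas you invoke $\overline{e^{w}}=e^{\overline{w}}$ directly and add a remark on exchanging conjugation with the infinite sum; these are cosmetic differences, not a different route.
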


\begin{rem}  We point out that, even though the sum in Proposition goes from $-\infty$ to $+\infty$, since we are considering Hecke eigenforms, the sums we consider below go from $1$ to $+\infty$.
\end{rem}

\begin{proof}
  Write $z=x+iy\in\HH$, then $-\overline{z}=-x+iy\in\HH$.
  We have
  \begin{align*}
    e^{2\pi i nz/N} &= e^{-2\pi ny/N}
    \left(\cos(2\pi nx/N)+i\sin(2\pi nx/N)\right)\\
    e^{-2\pi i n\overline{z}/N} &= e^{-2\pi ny/N}
    \left(\cos(2\pi nx/N)-i\sin(2\pi nx/N)\right)
    =\overline{e^{2\pi i nz/N}},
  \end{align*}
  from which the claim follows because the coefficients $a_n$ are in $\RR$.
\end{proof}

\begin{cor}
  Given $f$ as in the Proposition:
  \begin{enumerate}[(a)]
    \item $f(iy)\in\RR$ for all $y\in\RR_{>0}$;
    \item if $y\in\RR_{>0}$, $p$ is prime and $b\in\{1,\dots,p-1\}$ then
      \begin{equation*}
        f\left(\frac{iy-b}{p}\right)=
        \overline{f\left(\frac{iy+b}{p}\right)}.
      \end{equation*}

\item Let $f\in M_k(\SL_2(\ZZ))$.  The summands in $T_pf(iy)$ (for $p>2$) come in pairs of conjugate complex numbers, which allows us to reduce the necessary computation in half.  In particular,
\begin{equation*}
  (T_p f)(iy) = p^{k-1} f(iyp) +\frac{1}{p} f\left(i\frac{y}{p}\right)
  + \frac{2}{p}\Bigg(\Re f\left(\frac{iy+1}{p}\right)+\dots+
  \Re f\left(\frac{iy+\frac{p-1}{2}}{p}\right)\Bigg).
\end{equation*}
  \end{enumerate}
\end{cor}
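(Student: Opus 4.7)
The plan is to derive all three parts directly from the mirror relation $f(-\overline{z}) = \overline{f(z)}$ established in the preceding Proposition; no new analytic input is needed.

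For part (a), I would observe that if $y \in \RR_{>0}$ and $z = iy$, then $-\overline{z} = -\overline{iy} = iy = z$. Plugging this into the mirror relation gives $f(iy) = \overline{f(iy)}$, so $f(iy) \in \RR$.

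For part (b), I would take $z = (iy+b)/p$. Then $\overline{z} = (-iy+b)/p$, so $-\overline{z} = (iy-b)/p \in \HH$. The mirror relation immediately yields
\begin{equation*}
  f\!\left(\frac{iy-b}{p}\right) = f(-\overline{z}) = \overline{f(z)} = \overline{f\!\left(\frac{iy+b}{p}\right)}.
\end{equation*}

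For part (c), I would start from the defining formula~\eqref{eq:Tp-a},
\begin{equation*}
  (T_p f)(iy) = p^{k-1} f(piy) + \frac{1}{p} \sum_{j \pmod{p}} f\!\left(\frac{iy+j}{p}\right),
\end{equation*}
and choose representatives $j \in \{0, 1, \dots, p-1\}$. I would isolate the term $j=0$, which contributes $f(iy/p)$, and then pair the index $j = b$ with $j = p - b$ for $b \in \{1, \dots, (p-1)/2\}$ (recall $p$ is odd). Using the periodicity $f(w+1) = f(w)$ (which holds since $\left(\begin{smallmatrix}1&1\\0&1\end{smallmatrix}\right) \in \SL_2(\ZZ)$), I rewrite
\begin{equation*}
  f\!\left(\frac{iy+(p-b)}{p}\right) = f\!\left(\frac{iy-b}{p} + 1\right) = f\!\left(\frac{iy-b}{p}\right),
\end{equation*}
and then apply part (b) to conclude that each paired sum equals $2\,\Re f((iy+b)/p)$. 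Combining gives the stated formula.

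The main obstacle, such as it is, lies only in the bookkeeping in part (c): correctly re-indexing the sum, invoking the period-1 translation to turn $j = p-b$ into $j = -b$, and matching the resulting pairs with part~(b). There is no substantial analytic content beyond the mirror relation and the periodicity of $f$, so the argument should be short and elementary.
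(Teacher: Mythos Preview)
Your proposal is correct and follows exactly the paper's approach: the paper's proof simply says to apply the Proposition with $z=iy$ for (a), with $z=(iy+b)/p$ for (b), and to combine (a), (b), and the periodicity $f(z+1)=f(z)$ for (c). Your write-up is a faithful expansion of that terse outline, with the expected bookkeeping in part (c) spelled out.
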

\begin{proof}
  Apply the Proposition with (a) $z=iy$ and (b) $z=(iy+b)/p$.  For (c) we apply parts (a) and (b) and use the fact that $f(z+1)=f(z)$.
\end{proof}

\begin{rem}
When $p=2$ there are no savings to be gained by using this symmetry.  Also, the corollary allows us to carry out computations over the real numbers.
\end{rem}


\subsection{Comparison of the two methods}

In this section, we compare the modular symbol method as implemented in Sage \cite{sage} to our method.  In Figure~\ref{fig:err} we see the results of the following computation:  for each $p$ we compute the $p$th Hecke eigenvalue in both ways (using modular symbols and using our analytic approach) and take $\log_{10}$ of the absolute value of the difference.  We see in Figure~\ref{fig:err} that for a fixed $F\in S_{24}(1)$ the two methods agree to at least $10^{-8}$.  In this section we use $\lambda_p$ to denote the $p$th Hecke eigenvalue computed via modular symbols and $\tilde{\lambda}_p$ to denote the $p$th Hecke eigenvalue approximated using our analytic approach.

\begin{figure}
\begin{center}
\includegraphics[width=3in]{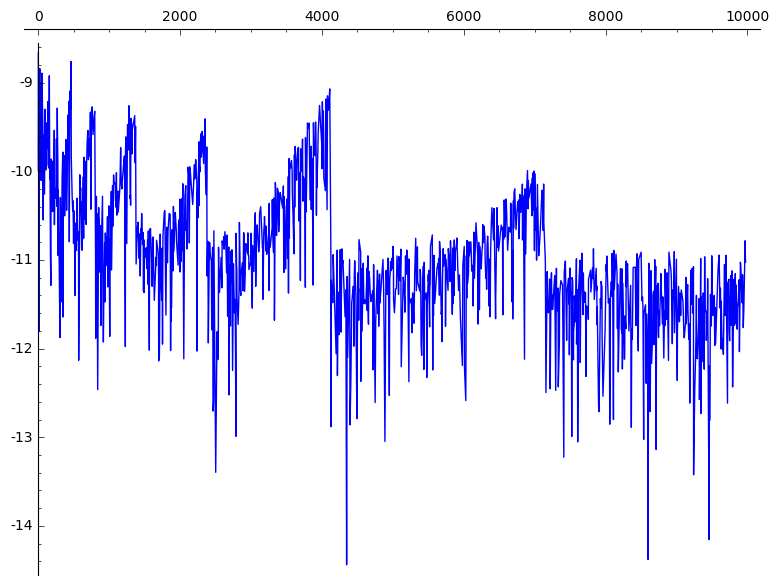}
\end{center}
\caption{Let $\lambda_p$ be the $p$th Hecke eigenvalue computed using modular symbols and $\tilde{\lambda}_p$ be the $p$th Hecke eigenvalue approximated using our analytic approach.  The plot here is $\log_{10}(|\lambda_p-\tilde{\lambda}_p|)$.}\label{fig:err}
\end{figure}

In Table~\ref{tbl:classical-timings} we highlight some timings in which we computed Hecke eigenvalues using both methods.  The results of our method are computed via code like
\begin{verbatim}
sage: F12 = Newforms(1, 12, names='a')[0]
sage: p = next_prime(10000)
sage: err = 0.1
sage: e = eigenvalue_numerical(F12, p, err)
sage: e
-5.758585642481476962744000?e21
\end{verbatim}
and the results of the modular symbols method are computed via code like\footnote{We have chosen this somewhat cryptic way of computing the eigenvalue via modular symbols over more obvious and readable ones for two reasons: (a) Sage uses aggressive caching of various intermediate results, which makes it hard to obtain useful timings from repeated evaluations of the same eigenvalue; (b) it is the quickest way of getting the eigenvalue, since the more user-facing methods perform other bookkeeping that is not absolutely necessary for our purpose and can easily take up the bulk of the running time.  For instance, \texttt{e = F12[p]} for the example given in the text takes a little over 9 minutes, compared to about 1 second for the more verbose modular symbols method.}
\begin{verbatim}
sage: F12 = Newforms(1, 12, names='a')[0]
sage: p = next_prime(10000)
sage: ms = F12.modular_symbols(1)
sage: T = ms._eigen_nonzero_element(p)
sage: e = ms._element_eigenvalue(T, name='b')
sage: e
-5758585642481476962744
\end{verbatim}

  \begin{table}[h]
    \centering
    \begin{tabular}{rrr|rrr}
      \toprule
      level & weight & $p$ & modular symbols & analytic & Eisenstein\\
      \midrule
      $1$ & $12$ & $\numprint{   1009}$ &   $0.073$ &   $\mathbf{0.051}$ & $0.071$\\
        && $\numprint{  10007}$ &   $0.967$ &   $\mathbf{0.557}$ & $0.847$\\
        && $\numprint{ 100003}$ &  $13.710$ &  $\mathbf{7.270}$ & $9.274$\\
        && $\numprint{1000003}$ & $163.515$ & $\mathbf{79.494}$ & $100.782$\\
      \midrule
    $1$ & $24$ & $\numprint{   1009}$ &   $0.158$ &   $\mathbf{0.069}$ & $0.085$\\
        && $\numprint{  10007}$ &   $2.023$ &   $\mathbf{0.751}$ & $0.935$\\
        && $\numprint{ 100003}$ &  $27.190$ &  $\mathbf{8.742}$ & $9.941$\\
        && $\numprint{1000003}$ & $330.854$ & $\mathbf{95.078}$ & $108.923$\\
      \midrule
    $1$ & $100$ & $\numprint{101}$ & $0.057$ & $0.043$ & $\mathbf{0.013}$\\
        && $\numprint{   1009}$ &   $0.918$ &   $0.233$ & $\mathbf{0.146}$\\
        && $\numprint{  10007}$ &   $14.197$ &   $1.938$ & $\mathbf{1.913}$\\
        && $\numprint{ 100003}$ &   $186.873$ &  $24.117$ & $\mathbf{23.449}$\\
      \midrule
    $1$ & $200$ & $\numprint{101}$ &  $0.171$ & $1.160$ & $\mathbf{0.024}$\\
        && $\numprint{   1009}$ &   $2.443$ &   $3.101$ & $\mathbf{0.347}$\\
        && $\numprint{  10007}$ &   $36.195$ &   $9.965$ & $\mathbf{5.104}$\\
        && $\numprint{ 100003}$ &   $500.388$ &  $84.217$ &  $\mathbf{69.916}$\\
      \midrule
      $2$ & $8$ & $\numprint{   1009}$ &   $\mathbf{0.050}$ &   $0.055$\\
          && $\numprint{  10007}$ &   $0.681$ &   $\mathbf{0.596}$\\
          && $\numprint{ 100003}$ &   $8.452$ &   $\mathbf{6.627}$\\
          && $\numprint{1000003}$ & $108.695$ & $\mathbf{75.283}$\\
      \midrule
      $2$ & $48$ & $\numprint{   1009}$ &   $0.360$ &   $\mathbf{0.108}$\\
          && $\numprint{  10007}$ &   $4.927$ &   $\mathbf{1.326}$\\
          && $\numprint{ 100003}$ &   $67.787$ &   $\mathbf{15.784}$\\
      \midrule
      $3$ & $6$ & $\numprint{   1009}$ &   $\mathbf{0.041}$ &   $0.055$\\
          && $\numprint{  10007}$ &   $\mathbf{0.542}$ &   $0.615$\\
          && $\numprint{ 100003}$ &   $\mathbf{6.809}$ &   $6.881$\\
          && $\numprint{1000003}$ & $87.462$ & $\mathbf{78.406}$\\
      \midrule
      $3$ & $24$ & $\numprint{   1009}$ &   $0.206$ &   $\mathbf{0.097}$\\
          && $\numprint{  10007}$ &   $2.642$ &   $\mathbf{1.070}$\\
          && $\numprint{ 100003}$ &   $34.894$ &   $\mathbf{12.209}$\\
      \bottomrule
    \end{tabular}
    \caption{A summary of timings (in seconds; the \textbf{bold} figures identify the shortest time in each row) to compute Hecke eigenvalues in different ways.  The first uses the modular symbols method as implemented in Sage; the second way uses analytic evaluation of the eigenform itself; the third way, valid only in level $1$, uses analytic evaluation of the Eisenstein series $E_4$ and $E_6$.  The computations were done on one Intel i7-8550U core at 1.80GHz, on a machine with 16 GB RAM.  The benchmarks were repeated several times using the \texttt{pytest-benchmark} framework and the median timing is reported here.}\label{tbl:classical-timings} 
  \end{table}


A clear difference between the two methods is the number of Fourier coefficients required for the computation of the $p$th Hecke eigenvalue.  In Figure~\ref{fig:T} we see how few coefficients are needed to compute $\tilde{\lambda}_p$.

\begin{figure}
\begin{center}
\includegraphics[width=3in]{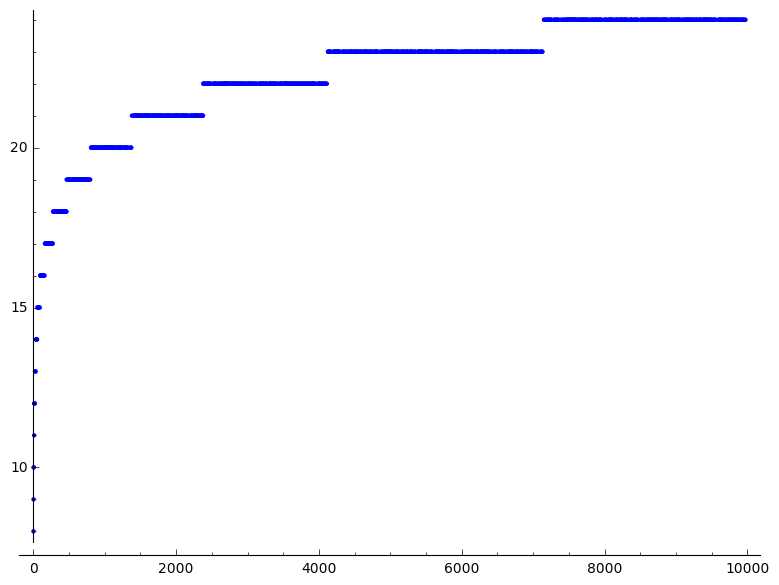}
\end{center}
\caption{For each $p$ we calculate the number of coefficients of $F\in S_{24}(1)$ to compute $\tilde{\lambda}_p$, our approximation to the $p$th Hecke eigenvalue $\lambda_p$ using our analytic approach.  In principle, to compute $\lambda_p$ using the Fourier expansion of the modular form, one would need the first $p$ coefficients of the modular form.}\label{fig:T}
\end{figure} 

\subsection{Improvements in Level 1} In the previous sections we described how to evaluate $f$ using its Fourier expansions and in Table~\ref{tbl:classical-timings} we provide some timings using this method.  In this section we focus on the specific case of level 1 and describe a faster way to evaluate $f$.  It is well known that the ring of modular forms of level 1 is generated by the Eisenstein series of levels 4 and 6.  What we do in this case is the following:  Let $z\in\HH$:
\begin{enumerate}
\item\label{step1} calculate roughly $k/12$ coefficients of the modular form $f$, $E_4$ and $E_6$;
\item\label{step2} identify a basis for $S_k(1)$ by finding $T = \{(a,b)\in\ZZ:4a+6b=k-12 \}$ and computing the first $k/12$ coefficients of $\Delta\cdot E_4^a E_6^b$ for all $(a,b)\in T$;
\item express $f$ as a linear combination of the basis elements found in Step \ref{step2} and replace $\Delta$ with $\tfrac{E_6^2-E_4^3}{1728}$;
\item\label{step4} calculate $x=E_4(z)$, $y=E_6(z)$ and the evaluate $f(z)$ by taking the appropriate algebraic combination of $x$ and $y$.
\end{enumerate}
\begin{rem}  Some comments on the steps are in order: 
\begin{itemize}
\item In Step~\ref{step1}, the number of coefficients needed for large weight is much smaller than the number of coefficients needed to evaluate $f$ using its Forier expansion.
\item In Step~\ref{step4}, we use Fredrik Johansson's arb package \cite{arb}, available in Sage \cite{sage} to evaluate $E_4$ and $E_6$.
\end{itemize}
\end{rem}

Numerical experiments suggest that this approach via evaluation of Eisenstein series is particularly beneficial in the case of large weights (see Table~\ref{tbl:classical-timings}).


\bibliographystyle{plain}
\bibliography{hecke}

\end{document}